\newcommand{\specialcell}[2][c]{\begin{tabular}[#1]{@{}c@{}}#2\end{tabular}}
\newtheorem{fact}{Fact}
\newtheorem{remark}{Remark}
\newcommand\given[1][]{\:#1\vert\:}
\newcommand{\half}{\tfrac1{2}}
\newcommand{\E}{\mathbb{E}}
\newcommand{\bX}{{\boldsymbol{X}}}
\newcommand{\Id}{{\boldsymbol{Id}}}
\newcommand{\bSigma}{{\boldsymbol{\Sigma}}}
\newcommand{\by}{{\boldsymbol{y}}}
\newcommand{\bz}{{\boldsymbol{z}}}
\newcommand{\be}{{\boldsymbol{e}}}
\newcommand{\bw}{{\boldsymbol{w}}}
\newcommand{\bres}{{\boldsymbol{r}}}
\newcommand{\res}{{{r}}}
\newcommand{\maxIter}{{{maxIter}}}
\newcommand{\bbeta}{{\boldsymbol{\beta}}}
\newcommand{\balpha}{{\boldsymbol{\alpha}}}
\newcommand{\bgamma}{{\boldsymbol{\gamma}}}
\newcommand{\bP}{{\boldsymbol{P}}}
\newcommand{\betaln}{{\boldsymbol{\beta_{LN}}}}
\newcommand{\betals}{{\boldsymbol{\beta_{LS}}}}
\newcommand{\betaopt}{{\boldsymbol{\beta^{\star}}}}
\title{Convergence properties of the randomized extended Gauss-Seidel and Kaczmarz methods} 
\author{Anna Ma, Deanna Needell, Aaditya Ramdas} 
\date{\today}
\begin{document}
\maketitle

\begin{abstract}
The Kaczmarz and Gauss-Seidel methods both solve a linear system $\bX\bbeta = \by$ by iteratively refining the solution estimate.  Recent interest in these methods has been sparked by a proof of Strohmer and Vershynin which shows the \textit{randomized} Kaczmarz method converges linearly in expectation to the solution. Lewis and Leventhal then proved a similar result for the randomized Gauss-Seidel algorithm.  However, the behavior of both methods depends heavily on whether the system is under or overdetermined, and whether it is consistent or not.  Here we provide a unified theory of both methods, their variants for these different settings, and draw connections between both approaches.  In doing so, we also provide a proof that an extended version of randomized Gauss-Seidel converges linearly to the least norm solution in the underdetermined case (where the usual randomized Gauss Seidel fails to converge).  We detail analytically and empirically the convergence properties of both methods and their extended variants in all possible system settings.  With this result, a complete and rigorous theory of both methods is furnished. 
\end{abstract}

\section{Introduction}
We consider solving a linear system of equations 
\begin{equation}\label{eq:syseq}
\bX \bbeta = \by,
\end{equation}
for a (real or complex) $m\times n$ matrix $\bX$, in various problem settings.  Recent interest in the topic was reignited when \citet{SV09:Randomized-Kaczmarz} proved the linear\footnote{Mathematicians often refer to linear convergence as exponential convergence.} convergence rate of the Randomized Kaczmarz (RK) algorithm that works on the rows of $\bX$ (data points). Following that, \citet{LL10:Randomized-Methods} proved the linear convergence of a Randomized Gauss-Seidel (RGS), i.e. Randomized Coordinate Descent, algorithm that works on the columns of $\bX$ (features).

When the system of equations is inconsistent (i.e. has no exact solution), as is typically the case when $m>n$ in real-world overconstrained systems, RK is known to not converge to the ordinary least squares solution 
\begin{equation}\label{eq:LS}
\betals := \arg\min_{\bbeta} \half \|\by - \bX \bbeta\|_2^2 
\end{equation}
as studied by \citet{Nee10:Randomized-Kaczmarz}. \citet{ZF12:Randomized-Extended} extended the RK method with the modified Randomized Extended Kaczmarz (REK) algorithm, which linearly converges to $\betals$. Interestingly, in this setting, we will argue in Section \ref{sec:undercomplete} that RGS does converge to $\betals$ without any special extensions.

\subsection{Motivation and contribution}

The above introduction represents only half the story. When $m<n$, there are fewer constraints than variables, and the system has infinitely many solutions. In this case, especially if we have no prior reason to believe any additional sparsity in the signal structure, we are often interested in finding the least Euclidean norm solution:
\begin{equation}\label{eq:LN}
\betaln := \arg\min_{\bbeta} \|\bbeta\|_2 \mbox{~ s.t. ~} \by = \bX\bbeta.
\end{equation}
While RGS converges to $\betals$ in the overcomplete setting, we shall argue in Section \ref{sec:undercomplete}  that in the undercomplete setting it does not converge to $\betaln$. We will also argue that RK does converge to $\betaln$ without any extensions in this setting.

The main contribution of our paper is to provide a unified theory of these related iterative methods. We will also construct an extension to RGS that parallels REK, which unlike RGS does converge to $\betaln$ (just as REK, unlike RK, converges to $\betals$). Some desired properties for this algorithm include that it should \textit{also} converge linearly, not require much extra computation, and work well in simulations. We shall see that our Randomized Extended Gauss-Seidel (REGS) method does indeed possess these desired properties. A summary of this unified theory is provided in Table~\ref{tab1}.

\begin{table}[ht]
\begin{tabular}{|l|l|l|l|}
\hline
Method & \specialcell{Overconstrained,\\ consistent : \\ convergence to $\betaopt$? } & \specialcell{Overconstrained,\\ inconsistent : \\ convergence to $\betals$?} & \specialcell{Underconstrained :\\ convergence to $\betaln$?} \\
\hline
RK & Yes \cite{SV09:Randomized-Kaczmarz} & No \cite[Thm. 2.1]{Nee10:Randomized-Kaczmarz} & Yes (Sec. \ref{sec:undercomplete})\\
\hline
REK & Yes \cite{ZF12:Randomized-Extended} & Yes \cite{ZF12:Randomized-Extended} & Yes (Sec. \ref{sec:undercomplete})\\
\hline
RGS & Yes \cite{LL10:Randomized-Methods} & Yes \cite{LL10:Randomized-Methods} & No (Sec. \ref{sec:undercomplete})\\
\hline
REGS & Yes (Remark \ref{regsoc}) & Yes (Sec. \ref{sec43}) & Yes (Thm. \ref{thm:main}) \\
\hline
\end{tabular}
\caption{Summary of convergence properties for the overdetermined and consistent setting, overdetermined and inconsistent setting, and underdetermined settings.  We write $\betaopt$ to denote the solution to \eqref{eq:syseq} in the overdetermined consistent setting, with $\betals$ and $\betaln$ being defined in \eqref{eq:LS} and \eqref{eq:LN} for the other two settings. 
}
\label{tab1}
\end{table}

\subsection{Paper Outline}

In Section~\ref{sec:algs} we recap the three main existing algorithms mentioned in the introduction (RK, RGS, REK).  We discuss the performance of these algorithms in the three natural settings described in Table~\ref{tab1} in Section~\ref{sec:variants}. Section~\ref{sec:REGS} introduces our proposed algorithm (REGS) and proves its linear convergence to the least norm solution, completing the theoretical framework.  Lastly, we end with some simulation experiments in Section~\ref{sec:exp} to demonstrate the tightness and usefulness of our theory, and conclude in Section~\ref{sec:conclude}.

\section{Existing Algorithms and Related Work}\label{sec:algs}

In this section, we will summarize the algorithms mentioned in the introduction, i.e. RK, RGS and REK. We will describe their iterative update rules and mention their convergence guarantees, leaving the details of convergence to the next section.  Throughout the paper we will use the notation $\bX^i$ to represent the $i$th row of $\bX$ (or $i$th entry in the case of a vector) and $\bX_{(j)}$ to denote the $j$th column of a matrix $\bX$.  We will write the estimation $\bbeta$ as a column vector.  We write vectors and matrices in boldface, and constants in standard font.

\subsection{Randomized Kaczmarz (RK)}

The Kaczmarz method was first introduced in the notable work of \citet{Kac37:Angenaeherte-Aufloesung}.  It has gained recent interest in tomography research where it is known as the \textit{Algebraic Reconstruction Technique} (ART) \cite{GBH70:Algebraic-Reconstruction,Nat01:Mathematics-Computerized,Byr08:Applied-Iterative,herman2009fundamentals}.  Although in its original form the method selects rows in a deterministic fashion (often simply cyclically), it has been well observed that a random selection scheme reduces the possibility of a poor choice of row ordering \cite{HS78:Angles-Null,HM93:Algebraic-Reconstruction}.  Earlier convergence analysis of the randomized variant were obtained (e.g. \cite{XZ02:Method-Alternating}), but yielded bounds with expressions that were difficult to evaluate.  \citet{SV09:Randomized-Kaczmarz} showed that the RK method described above 
has an expected linear convergence rate to the solution $\betaopt$ of \eqref{eq:syseq}, and are the first to provide an explicit convergence rate in expectation which depends only on the geometric properties of the system.  This work was extended by \citet{Nee10:Randomized-Kaczmarz} to the inconsistent case, analyzed almost surely by \citet{CP12:Almost-Sure-Convergence}, accelerated in several ways \cite{Elf80:Block-Iterative-Methods,EN11:Acceleration-Randomized,popa2012kaczmarz,NW12:Two-Subspace-Projection,needell2013paved}, and extended to more general settings \cite{LL10:Randomized-Methods,richtarik2012iteration,NSWjournal}. 

We describe here the randomized variant of the Kaczmarz method put forth by \citet{SV09:Randomized-Kaczmarz}.
Taking  $\bX, \by$ as input and starting from an arbitrary initial estimate for $\bbeta$ (for example $\bbeta_0 = \bf{0}$), RK repeats the following in each iteration.  First, a random row $i \in \{1 ,..., m\}$ is selected with probability proportional to its Euclidean norm, i.e.
$$
\Pr(\text{row} = i) = \frac{\|\bX^i\|^2_2}{\|\bX\|_F^2},
$$
where $\|\bX\|_F$ denotes the Frobenius norm of $\bX$.
Then, project the current iterate onto that row, i.e.
\begin{equation}
\bbeta_{t+1} := \bbeta_t + \frac{(y^i - \bX^{i}\bbeta_t)}{\|\bX^i\|^2_2} (\bX^i)^*,
\end{equation}
where here and throughout $\bX^*$ denotes the (conjugate) transpose of $\bX$.

Intuitively, this update can be seen as greedily satisfying the $i$th equation in the linear system.  Indeed, it is easy to see that after the update,
\begin{equation}
\bX^{i}\bbeta_{t+1} = y^i.
\end{equation}

Referring to \eqref{eq:LS} and defining
$$
L(\bbeta) = \half \|\by-\bX\bbeta\|^2 = \half \sum_{i=1}^m (y^i - \bX^{i} \bbeta)^2,
$$
 we can alternatively interpret this update as stochastic gradient descent (choosing a random data-point on which to update), where the step size is the inverse Lipschitz constant of the stochastic gradient  
$$
\nabla^2 \half (y^i - \bX^{i} \bbeta)^2 = \|\bX^i\|^2_2.
$$

\subsection{Randomized Extended Kaczmarz (REK)}

For inconsistent systems, the RK method does not converge to the least-squares solution as one might desire.  This fact is clear since the method at each iteration projects completely onto a selected solution space, being unable to break the so-called \textit{convergence horizon}.  One approach to overcome this is to use relaxation parameters, so that the estimates are not projected completely onto the subspace at each iteration \cite{whitney1967two,tanabe1971projection,censor1983strong,hanke1990acceleration}. Recently, Zouzias and Freris \cite{ZF12:Randomized-Extended} proposed a variant of the RK method motivated by the work of \citet{Pop98:Extensions-Block-Projections} which instead includes a random projection to iteratively reduce the component of $\by$ which is orthogonal to the range of $\bX$.  This method, named Randomized Extended Kaczmarz (REK) can be described by the following iteration updates, which can be initialized with $\bbeta_0 = \boldsymbol{0}$ and $\bz_0 = \by$:
\begin{equation}
\bbeta_{t+1} := \bbeta_t + \frac{(y^i - z^i_t- \bX^{i}\bbeta_t)}{\|\bX^i\|^2_2} (\bX^i)^*, \quad \bz_{t+1} = \bz_t - \frac{\langle \bX_{(j)}, \bz_t \rangle}{\|\bX_{(j)}\|_2^2}\bX_{(j)}.
\end{equation}
Here, a column $j \in \{1,..., n\}$ is also selected at random with probability proportional to its Euclidean norm:
\begin{equation}\label{eq:c}
\Pr(\text{column} = j) = \frac{\|\bX_{(j)}\|^2_2}{\|\bX\|_F^2},
\end{equation}
 and again $\bX_{(j)}$ denotes the $j$th column of $\bX$.  Here, $\bz_t$ approximates the component of $\by$ which is orthogonal to the range of $\bX$, allowing for the iterates $\bbeta_t$ to converge to the true least-squares solution of the system.  \citet{ZF12:Randomized-Extended} prove that REK converges linearly in expectation to this solution $\betals$.

\subsection{Randomized Gauss-Seidel (RGS)}

Again taking $\bX, \by$ as input and starting from an arbitrary $\bbeta_0$, the Randomized Gauss-Seidel (RGS) method (or the Randomized Coordinate Descent method) repeats the following in each iteration. First, a random column $j \in \{1,..., n\}$ is selected as in \eqref{eq:c}.
We then minimize the objective $L(\bbeta) = \half \|\by-\bX\bbeta\|^2_2$ with respect to this coordinate to get
\begin{equation}\label{eq:rgs}
\bbeta_{t+1} := \bbeta_t + \frac{\bX_{(j)}^*(\by-\bX\bbeta_t)}{\|\bX_{(j)}\|^2_2} \be_{(j)}
\end{equation}
where $\be_{(j)}$ is the $j$th coordinate basis column vector (all zeros with a $1$ in the $j$th position). It can be seen as greedily minimizing the objective with respect to the $j$th coordinate. Indeed, letting $\bX_{(-j)},\bbeta^{-j}$ represent $\bX$ without its $j$th column and $\bbeta$ without its $j$th coordinate,
\begin{equation}
\frac{\partial L}{\partial \bbeta^j} = -\bX_{(j)}^*(\by-\bX\bbeta) = -\bX_{(j)}^*(\by-\bX_{(-j)}\bbeta^{-j} - \bX_{(j)}\bbeta^j).
\end{equation}
Setting this equal to zero for the coordinate-wise minimization, we get the aforementioned update \eqref{eq:rgs} for $\bbeta^j$. Alternatively, since $[\nabla L(\bbeta)]^j = -\bX_{(j)}^*(\by-\bX\bbeta)$, the above update can intuitively be seen as a univariate descent step where the step size is the inverse Lipschitz constant of the gradient along the $j$th coordinate, since the $(j,j)$ entry of the Hessian is 
$$
[\nabla^2 L(\bbeta)]_{j,j} = (\bX^*\bX)_{j,j} = \|\bX_{(j)}\|^2_2.
$$

\citet{LL10:Randomized-Methods} showed that this algorithm has an expected linear convergence rate.  We will detail the convergence properties of this algorithm and the others in the next section.

\section{Problem Variations}\label{sec:variants}

We first examine the differences in behavior of the two algorithms RGS and RK in three distinct but related settings. This will highlight the opposite behaviors of these two similar algorithms.

When the system of equations \eqref{eq:syseq} has a unique solution, we represent this by $\betaopt$. This happens when $m \geq n$, and the system is  consistent. Assuming  that $\bX$ has full column rank, 
\begin{equation}
\betaopt = (\bX^*\bX)^{-1}\bX^*\by,
\end{equation}
and then $\bX\betaopt = \by$. 

When \eqref{eq:syseq} does not have any consistent solution, we refer to the least-squares solution of \eqref{eq:LS} as $\betals$. This could happen in the overconstrained case, when $m > n$. Again, assuming  that $\bX$ has full column rank, we have
\begin{equation}
\betals = (\bX^*\bX)^{-1}\bX^*\by,
\end{equation}
and we can write $\bres := \by - \bX\betals$ as the residual vector.

When \eqref{eq:syseq} has infinitely many solutions, we call the minimum Euclidean norm solution given by \eqref{eq:LN}, $\betaln$. This could happen in the underconstrained case, when $m < n$. Assuming that $\bX$ has full row rank, we have
\begin{equation}
\betaln = \bX^* (\bX\bX^*)^{-1} \by.
\end{equation}
In the above notation, 
the $LS$ stands for Least Squares and $LN$ for Least Norm. We shall return to each of these three situations in that order in future sections.

  One of our main contributions is to achieve a unified understanding of the behavior of RK and RGS in  these different situations. 
The literature for RK deals mainly with the first two settings only (see  \cite{SV09:Randomized-Kaczmarz}, \cite{Nee10:Randomized-Kaczmarz}, \cite{ZF12:Randomized-Extended}).  In the third setting, one readily obtains convergence to an \textit{arbitrary} solution (see e.g. (3) of \cite{liu2014asynchronous}), but the convergence to the least norm solution is not often studied (likely for practical reasons). 
The literature for RGS typically focuses on more general setups than our specific quadratic least squares loss function $L(\beta)$ (see \citet{nesterov2012efficiency} or \citet{richtarik2012iteration}). However, for both the purposes of completeness, and for a more thorough understanding of the relationship between RK and RGS, it turns out to be crucial to analyze all three settings (for equations \eqref{eq:syseq}-\eqref{eq:LN}).

\begin{enumerate}
\item When $\betaopt$ is a unique consistent solution, we present proofs of the linear convergence of both algorithms - the results are known from papers by \cite{SV09:Randomized-Kaczmarz} and \cite{LL10:Randomized-Methods} but are presented here in a novel manner so that their relationship becomes clearer and direct comparison is easily possible. 

\item When $\betals$ is the (inconsistent) least squares solution, we show why RGS iterates converge linearly to $\betals$, but RK iterates do not - making RGS preferable. These facts are not hard to see, but we make it more intuitively and mathematically clear why this should be the case.

\item When $\betaln$ is the minimum norm consistent solution, we explain why RK converges linearly to it, but RGS iterates do not (both so far seemingly undocumented observations) - making RK preferable. 
\end{enumerate}

Together, the above three points complete the picture (with solid accompanying intuition) of the opposing behavior of RK and RGS.  Later, we will present our variant of the RGS method, the Randomized Extended Gauss-Seidel (REGS), and compare with the corresponding variant of RK (REK).  This new analysis will complete the unified framework for these methods.

\subsection{Overconstrained System, Consistent}\label{sec31}

Here we will assume that $m > n$, $\bX$ has full column rank, and the system is consistent, so
 $\by = \bX\betaopt$.
First, let us write the updates used by both algorithms in a revealing fashion. If RK and RGS select row $i$ and column $j$ at step $t+1$, and $\be^i$ (resp. $\be_{(j)}$) is the $i$th coordinate basis row (resp. column) vector, then the updates can be rewritten as:
\begin{align}\label{eq:RKupdate}
&\mbox{(RK)}& \bbeta_{t+1} &:= \bbeta_t + \frac{\be^{i} \bres_t}{\|\bX^i\|^2_2} (\bX^i)^*&\\
&\mbox{(RGS)}& \bbeta_{t+1} &:= \bbeta_t + \frac{\bX_{(j)}^* \bres_t}{\|\bX_{(j)}\|^2_2}\be_j &
\end{align}
where $\bres_t = \by - \bX\bbeta_t = \bX\betaopt-\bX\bbeta_t$ is the residual vector at iteration $t$. Then multiplying both equations by $\bX$ gives
\begin{align}
&\mbox{(RK)}&\bX\bbeta_{t+1} &:= \bX\bbeta_t + \frac{\bX^{i}(\betaopt-\bbeta_t)}{\|\bX^i\|^2_2} \bX (\bX^i)^*&\\
&\mbox{(RGS)}& \bX\bbeta_{t+1} &:= \bX\bbeta_t + \frac{\bX_{(j)}^* \bX(\betaopt-\bbeta_t)}{\|\bX_{(j)}\|^2_2}\bX_{(j)}.& \label{eq:RGSupdate}
\end{align}

We now come to an important difference, which is the key update equation for RK and RGS. 

First, from the update (\ref{eq:RKupdate}) for RK, we have that $\bbeta_{t+1}-\bbeta_t$ is parallel to $\bX^i$.  Also, $\bbeta_{t+1}-\betaopt$ is orthogonal to $\bX^i$ (since $\bX^{i}(\bbeta_{t+1}-\betaopt)=y^i - y^i = 0$). Then by the Pythagorean theorem, 
\begin{equation}\label{eq:RKrecursion}
\|\bbeta_{t+1} - \betaopt\|^2_2 = \|\bbeta_t - \betaopt\|^2_2 - \|\bbeta_{t+1} - \bbeta_t\|^2_2.
\end{equation}
Note that from the update (\ref{eq:RGSupdate}), we have that $\bX\bbeta_{t+1} - \bX\bbeta_t$ is parallel to $\bX_{(j)}$. Also, $\bX\bbeta_{t+1} - \bX\betaopt$ is orthogonal to $\bX_{(j)}$ (since $\bX_{(j)}^*(\bX\bbeta_{t+1} - \bX\betaopt) = \bX_{(j)}^*(\bX\bbeta_{t+1} - \by) = 0$ by the optimality condition $\partial L/\partial \bbeta^{j} = 0$). Then again by the Pythagorean theorem,
\begin{equation}\label{eq:RGSrecursion}
\|\bX\bbeta_{t+1} - \bX\betaopt\|^2_2 = \|\bX\bbeta_t - \bX\betaopt\|^2_2 - \|\bX\bbeta_{t+1} - \bX\bbeta_t\|^2_2.
\end{equation}
The rest of the proof follows by simply substituting for the last term in the above two equations, and is presented in the following table for easy comparison. Note $\bSigma=\bX^*\bX$ is the full-rank covariance matrix and we first
 take expectations with respect to the randomness at the $(t+1)$st step, conditioning on all randomness up to the $t$th step. We later iterate this expectation.

\begin{table}[ht]
\begin{tabular}{|l|l|}
\hline
\specialcell{Randomized Kaczmarz:\\ $\quad\E_{t}\|\bbeta_{t+1} - \betaopt\|^2_2$} & \specialcell{Randomized Gauss-Seidel:\\ $\quad\E_{t}\|\bX\bbeta_{t+1} - \bX\betaopt\|^2_2$} \\
\hline  
$\begin{aligned}[t]
&= \|\bbeta_t - \betaopt\|^2_2 - \E\|\bbeta_{t+1} - \bbeta_t\|^2_2 \\
&= \| \bbeta_{t} - \betaopt\|^2_2 \\
& \quad -  \sum_i \frac{\|\bX^i\|^2_2}{\|\bX\|_F^2} \frac{(\bX^i(\bbeta_t - \betaopt))^2}{(\|\bX^i\|^2_2)^2} \|\bX^i\|^2_2  \\
&= \|\bbeta_{t} - \betaopt\|^2_2 \left(1 - \frac{\| \bX(\bbeta_t - \betaopt)\|^2_2}{\|\bX\|_F^2 \| \bbeta_{t} - \betaopt\|^2_2} \right) \\ 
&\leq \|\bbeta_t-\betaopt\|^2_2 \left(1 - \frac{\lambda_{\min}(\bSigma)}{Tr(\bSigma)}\right)  
%$= $ & $= $ \\
\end{aligned}$
&
$\begin{aligned}[t]
&= \|\bX\bbeta_t - \bX\betaopt\|^2_2 - \E\|\bX\bbeta_{t+1} - \bX\bbeta_t\|^2_2\\
& = \|\bX \bbeta_{t} - \bX\betaopt\|^2_2 \\
& \quad - \sum_j \frac{\|\bX_{(j)}\|^2_2}{\|\bX\|_F^2} \frac{(\bX_{(j)}^* \bX(\bbeta_t - \betaopt))^2}{(\|\bX_{(j)}\|^2_2)^2}\|\bX_{(j)}\|^2_2\\
& = \|\bX \bbeta_{t} - \bX\betaopt\|^2_2 \left(1 -  \frac{\|\bX^* \bX(\bbeta_t - \betaopt)\|^2_2}{\|\bX\|_F^2\|\bX \bbeta_{t} - \bX\betaopt\|^2_2} \right) \\
& \leq \|\bX\bbeta_t-\bX\betaopt\|^2_2 \left(1 - \frac{\lambda_{\min}(\bSigma)}{Tr(\bSigma)}\right)
\end{aligned}$\\
\hline
\end{tabular}
\label{tab:over_compare}
\end{table}

Here, $\lambda_{\min}(\bSigma)\| \bbeta_t - \betaopt\|^2_2 \leq \|\bX(\bbeta_t - \betaopt)\|^2_2 $  i.e. $\lambda_{\min}(\bSigma)$ is the smallest eigenvalue of $\bSigma$ (singular value of $\bX$). 
It follows that
\begin{align}
&\mbox{(RK)}& \E\|\bbeta_t - \betaopt\|^2_2 &\leq \left( 1 - \frac{\lambda_{\min}(\bSigma)}{Tr(\bSigma)} \right)^{t}\|\bbeta_0-\betaopt\|^2_2& \label{eq:RGSlin}\\
&\mbox{(RGS)}& \E\|\bbeta_t - \betaopt\|_\bSigma^2 &\leq \left( 1 - \frac{\lambda_{\min}(\bSigma)}{Tr(\bSigma)} \right)^{t}\|\bbeta_0-\betaopt\|_\bSigma^2,\notag 
\end{align}
where $\|\bw\|_{\bSigma}^2 = \bw^*\bSigma\bw = \|\bX\bw\|_2^2$ is the norm induced by $\bSigma$.  
Since $\bSigma$ is invertible when $m>n$ and $\bX$ has full column rank, the last equation also implies linear convergence of $\E\|\bbeta_t-\betaopt\|_2^2$.
The final results exist in \citet{SV09:Randomized-Kaczmarz,LL10:Randomized-Methods} but there is utility in seeing the two proofs in a form that differs from their original presentation, side by side. In this setting, both RK and RGS are essentially equivalent (without computational considerations).

\subsection{Overconstrained System, Inconsistent}\label{sec32}

Here, we will assume that $m > n$, $\bX$ is full column rank, and the system is inconsistent, so $\by = \bX\betals + \bres$, where $\bres$ is such that $\bX^* \bres = 0$. It is easy to see this condition, because as mentioned earlier,
\[
\betals = (\bX^* \bX)^{-1}\bX^* \by,
\]
implying that $\bX^* \bX \betals = \bX^* \by$. Substituting $\by=\bX\betals+\bres$ gives that $\bX^*\bres=0$.

In this setting, RK is known to not converge to the least squares solution, as is easily verified experimentally and geometrically. The tightest convergence upper bounds known are by \cite{Nee10:Randomized-Kaczmarz} and \cite{ZF12:Randomized-Extended} who show that
\begin{align*}
\E\|\bbeta_t - \betals\|^2_2 &\leq \left( 1 - \frac{\lambda_{\min}(\bSigma)}{Tr(\bSigma)} \right)^{t}\|\bbeta_0-\betals\|^2_2 + \frac{\|\bres\|^2_2}{\lambda_{\min}(\bSigma)}\\
&= \left( 1 - \frac{\sigma^2_{\min}(\bX)}{\|\bX\|_F^2} \right)^{t}\|\bbeta_0-\betals\|^2_2 + \frac{\|\bres\|^2_2}{\sigma^2_{\min}(\bX)},
\end{align*}
where we write $\sigma_{\min}(\bX)$ to denote the smallest (non-zero) singular value of $\bX$ and again $\|\bX\|_F$ its Frobenius norm.
Attempting the previous proof, \eqref{eq:RKrecursion} no longer holds -- the Pythagorean theorem fails because $\bbeta_{t+1}-\betals$ is no longer orthogonal to $\bX^i$ since $\bX^{i}(\bbeta_{t+1}-\betals) = y^i-\bX^{i}\betals \neq 0 $.
Intuitively, the reason RK does not converge is that every update of RK (say of row $i$) is a projection onto the ``wrong'' hyperplane that has constant $y^i$ (where the ``right'' hyperplane would involve projecting onto a parallel hyperplane with constant $y^i-\res^i$ where $\bres$ was defined above). An alternative intuition is that all RK updates are in the span of the rows, but $\betals$ is not in the row span. These intuitive explanations are easily confirmed by experiments seen in \cite[pp. 787--788]{ZF12:Randomized-Extended},\cite[pp. 402]{Nee10:Randomized-Kaczmarz}.  \citet{ZF12:Randomized-Extended} alleviate this issue with the REK algorithm, whose convergence obeys
\begin{equation}\label{rekrate}
\E\|\bbeta_t - \betals\|^2_2 \leq \left( 1 - \frac{\sigma^2_{\min}(\bX)}{\|\bX\|_F^2}\right)^{\left \lfloor{t/2}\right \rfloor }\left(1 + 2\frac{\sigma^2_{\min}(\bX)}{\sigma^2_{\max}(\bX)}\|\betals\|_2^2 \right).
\end{equation}

However, the fate of RK doesn't hold for RGS. Almost magically, in the previous proof, the Pythagorean theorem still holds in equation \eqref{eq:RGSrecursion} because 
\begin{equation}
\label{eq:lem2incon}
\bX_{(j)}^*(\bX\bbeta_{t+1} - \bX\betals) = \bX_{(j)}^*(\bX\bbeta_{t+1} - \by) + \bX_{(j)}^*(\by-\bX\betals) = 0.
\end{equation}
The first term is 0 by the  optimality condition for $\bbeta_{t+1}$ i.e. $\bX_{(j)}^*(\bX\bbeta_{t+1} - \by) = \partial L/\partial \bbeta^j = 0$. The second term is zero by the global optimality of $\betals$, i.e. $\bX^*(\by - \bX\betals) = \nabla L = 0$. Also, $\bSigma$ is full rank as before.
Indeed, RGS works in the space of fitted values $\bX\bbeta$ and not the iterates $\bbeta$.

In summary, RK does not converge to the LS solution, but RGS does at the same linear rate. 
This is what motivated the development of Randomized Extended Kaczmarz (REK) by \citet{ZF12:Randomized-Extended} which, as discussed earlier, is a modification of RK designed to converge to $\betals$ by randomly projecting out $r$. 
An independent paper by \citet{frek} argues however that in this setting RGS is preferable to REK in terms of computational convergence.

\subsection{Underconstrained System, Infinite Solutions}\label{sec:undercomplete}

Here, we will assume that $m < n$, $\bX$ is full row rank and the system is consistent with infinitely many solutions. As mentioned earlier, it is easy to show that
\[
\betaln = \bX^*(\bX\bX^*)^{-1}\by
\]
(which clearly satisfies $\bX\betaln = \by$). Every other consistent solution can be expressed as 
$$
\bbeta = \betaln + \bz ~\mbox{~ where ~}~ \bX\bz=0.
$$ 
Clearly any such $\bbeta$ would also satisfy $\bX\bbeta = \bX\betaln=y$. Since $\bX\bz=0$, $\bz \perp \betaln$ implying $\|\bbeta\|^2 = \|\betaln\|^2 + \|\bz\|^2$, showing that $\betaln$ is indeed the minimum norm solution as claimed.

In this case, RK has good behavior, and starting from $\bbeta_0=0$, it does converge linearly to $\betaln$. Intuitively, $\betaln = \bX^* \balpha$ (for $\balpha = (\bX\bX^*)^{-1}\by$) and hence is in the row span of $\bX$. Starting from $\bbeta_0=0$, RK only adds multiples of rows to its iterates, and hence will never have any component orthogonal to the row span of $\bX$. There is exactly one solution with no component orthogonal to the row span of $\bX$, and that is $\betaln$, and hence RK converges linearly to the required point, where the rate can be bounded in exactly the same way as \eqref{eq:RGSlin}. It is important not to start from an arbitrary $\bbeta_0$ since the RK updates can never eliminate any component of $\bbeta_0$ that is perpendicular to the row span of $\bX$.  Of course, the same properties are shared by REK for this case as well. It is noted in \citet[Sec. 2.1]{ZF12:Randomized-Extended} that the REK converges at the same rate for underdetermined systems as it does overdetermined systems.

Mathematically, the previous earlier proof works because the Pythagorean theorem holds since it is a consistent system.  Now, $\bSigma$ is not full rank but note that since both $\betaln$ and $\bbeta_t$ are in the row span, $\bbeta_t - \betaln$ has no component orthogonal to $\bX$ (unless it equals zero in which case the algorithm has already converged). Hence $\lambda_{\min}(\bSigma)\| \bbeta_t - \betaln\|^2 \leq \|\bX(\bbeta_t - \betaln)\|^2$ holds, where $\lambda_{\min}(\bSigma)$ is now understood to be the smallest positive eigenvalue of $\bSigma$. To summarize, the exact same bound \eqref{eq:RGSlin} still holds in this case, with the appropriate understanding of $\lambda_{\min}(\bSigma)$ and under the assumption that the initialized $\bbeta_0$ is in the row span of $\bX$.

RGS unfortunately suffers the opposite fate. The iterates do not converge to $\betaln$, even though $\bX\bbeta_t$ does converge to $\bX\betaln$. Mathematically, the convergence proof still carries forward as before, but in the last step when $\bX^* \bX$ cannot be inverted because it is not full rank. Hence we get convergence of the residual to zero, without getting convergence of the iterates to the least norm solution.  Intuitively, the iterates of RGS add components to the estimates that are orthogonal to the row span of $\bX$. These components are never eliminated because in minimizing the residual, they are ignored. Therefore, RGS is able to minimize the residual without finding the least norm solution.

\textit{Unfortunately, when each update is cheaper for RK than RGS (due to matrix size), RGS is preferred for reasons of convergence and when it is cheaper for RGS than RK, RK is preferred.}

\section{REGS}\label{sec:REGS}

We next introduce an extension of RGS, analogous to the extension REK of RK.  The purpose of extending RK was to allow for convergence to the least squares solution.  Now, the purpose of extending RGS is to allow for convergence to the least norm solution.  We view this method as a completion to the unified analysis of these approaches, and it may also possess advantages in its own right.

\subsection{The algorithm}
Consider the linear system \eqref{eq:syseq} with $m < n$.
Let $\betaln$ denote the least norm solution of the underdetermined system as described in \eqref{eq:LN}.  The REGS algorithm is described by the following pseudo-code. Analogous to the role $\bz$ plays in REK, $\bz$ iteratively approximates the component in $\bbeta$ orthogonal to the row-span of $\bX$.  By iteratively removing this component, we converge to the least norm solution. Note that outputting $\bbeta_t$ instead of  $\bbeta_t^{LN} = \bbeta_t - \bz_t$ in Algorithm \ref{alg:regs} recovers the RGS algorithm. This may be preferable in the overdetermined setting.

\begin{algorithm}{}
	\caption{Randomized Extended Gauss-Seidel (REGS) }\label{alg:regs}
\begin{algorithmic}[1]
\Procedure{}{$\bX$, $\by$, \maxIter}\Comment{$m\times n$ matrix $\bX$, $\by\in\mathbb{C}^m$, maximum iterations $T$}
\State Initialize $\bbeta_{0} = \bf{0}$, $\bz_{0} = \bf{0}$
\For {$t=1,2,\ldots, T  $ }
	\State Choose column  $\bX_{(j)}$  with probability  $\frac{\|\bX_{(j)}\|_2^2}{\|\bX\|_F^2} $
	\State Choose row  $\bX^i$ with probability $\frac{\|\bX^i\|_2^2}{\|\bX\|_F^2}$
	\State Set $\bgamma_{t} = \frac{{\bX^*_{(j)}}(\bX\bbeta_{t-1} - \by)}{\|\bX_{(j)}\|^2_2}\be_{(j)} $
	\State Set $\bbeta_{t} = \bbeta_{t-1} + \bgamma_{t}$
	\State Set $\bP_i = \Id_n - \frac{(\bX^i)^*\bX^i}{\|\bX^i\|_2^2}$\Comment{$\Id_n$ denotes the $n\times n$ identity matrix}
	\State Update $\bz_t = \bP_i(\bz_{t-1} + \bgamma_t)$
	\State Update $\bbeta_t^{LN} = \bbeta_t - \bz_t$
\EndFor
\State Output $\bbeta_t^{LN}$
\EndProcedure
\end{algorithmic}
\end{algorithm}

\subsection{Main result}
Our main result for the REGS method shows linear convergence to the least norm solution.

\begin{theorem} The REGS algorithm outputs an estimate $\bbeta^{LN}_T$ such that 
\begin{equation} \mathbb{E}\| \bbeta^{LN}_T - \betaln \|^2_2 \leq \alpha^T \|\bbeta^{LN}\|^2_2 + 2\alpha^{\lfloor T/2 \rfloor}\frac{B}{1-\alpha}\label{rate:regs}
\end{equation}
where $B = \frac{\| \bX\betaln\|^2_2}{\|\bX\|^2_F}$ and $\alpha = \left(1 - \frac{\sigma^2_{min}(\bX)}{\|\bX\|^2_F} \right)$.
\label{thm:main}
\end{theorem}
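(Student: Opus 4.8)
The plan is to reduce everything to a single error vector and show it obeys a recursion that couples an RK-type contraction with a perturbation driven by the (vanishing) RGS residual. Define $\bw_t := \betaln - \bbeta^{LN}_t = \bz_t - \bbeta_t + \betaln$, so that $\|\bbeta^{LN}_t - \betaln\|^2_2 = \|\bw_t\|^2_2$ and it suffices to bound $\E\|\bw_t\|^2_2$. The first step is to substitute the algorithm's updates $\bbeta_t = \bbeta_{t-1}+\bgamma_t$ and $\bz_t = \bP_i(\bz_{t-1}+\bgamma_t)$ into this definition, using $\bX\betaln = \by$; after cancellation I expect the clean recursion
\begin{equation*}
\bw_t = \bP_i\,\bw_{t-1} - \frac{(\bX^i)^*\bX^i}{\|\bX^i\|^2_2}(\bbeta_t - \betaln).
\end{equation*}
The key structural observation is that the second summand is parallel to $(\bX^i)^*$, while $\bP_i\bw_{t-1}$ lies in the orthogonal complement of $(\bX^i)^*$, so the two pieces are orthogonal and a Pythagorean split will apply.

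Before exploiting this I would record the invariant that every $\bbeta^{LN}_t$, and hence every $\bw_t$, stays in the row span of $\bX$. Indeed, the increment $\bbeta^{LN}_t - \bbeta^{LN}_{t-1}$ turns out to be a scalar multiple of $(\bX^i)^*$, and since $\bbeta^{LN}_0 = \bf{0}$, induction gives the claim. This is the REGS analogue of the RK observation of Section~\ref{sec:undercomplete}, and it is precisely what lets me use $\|\bX\bw_{t-1}\|^2_2 \geq \sigma^2_{\min}(\bX)\|\bw_{t-1}\|^2_2$ even though $\bSigma$ is rank-deficient here.

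Next I would condition on all randomness through the column drawn at step $t$ (so $\bbeta_t$ and $\bw_{t-1}$ are fixed) and take expectation over the random row $i$. By the orthogonality above, $\|\bw_t\|^2_2 = \|\bP_i\bw_{t-1}\|^2_2 + \tfrac{(\bX^i(\bbeta_t-\betaln))^2}{\|\bX^i\|^2_2}$, and averaging each term over the row probabilities yields
\begin{equation*}
\E_i\|\bw_t\|^2_2 = \|\bw_{t-1}\|^2_2 - \frac{\|\bX\bw_{t-1}\|^2_2}{\|\bX\|^2_F} + \frac{\|\bX(\bbeta_t-\betaln)\|^2_2}{\|\bX\|^2_F} \leq \alpha\|\bw_{t-1}\|^2_2 + \frac{\|\bX(\bbeta_t-\betaln)\|^2_2}{\|\bX\|^2_F},
\end{equation*}
the last step using the row-span bound on the middle term. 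Taking total expectations and writing $a_t = \E\|\bw_t\|^2_2$, this is $a_t \leq \alpha\,a_{t-1} + b_t$ with $b_t = \E\|\bX(\bbeta_t-\betaln)\|^2_2/\|\bX\|^2_F$. Since the $\bbeta_t$ iterates are exactly the RGS iterates (depending only on the column draws), the residual analysis behind \eqref{eq:RGSlin} applied in the setting of Section~\ref{sec:undercomplete} gives $\E\|\bX(\bbeta_t-\betaln)\|^2_2 \leq \alpha^t\|\bX\betaln\|^2_2$, hence $b_t \leq B\alpha^t$; and $a_0 = \|\betaln\|^2_2$ because $\bbeta^{LN}_0=\bf{0}$.

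Finally I would unroll the scalar recursion to $a_T \leq \alpha^T\|\betaln\|^2_2 + \sum_{k=1}^T \alpha^{T-k}B\alpha^k = \alpha^T\|\betaln\|^2_2 + BT\alpha^T$ and convert the $T\alpha^T$ term into the advertised form via the elementary estimate $n\alpha^n \leq \tfrac{1}{1-\alpha}$: splitting $T\alpha^T = T\alpha^{\lceil T/2\rceil}\alpha^{\lfloor T/2\rfloor}$ and using $T \leq 2\lceil T/2\rceil$ gives $T\alpha^T \leq \tfrac{2}{1-\alpha}\alpha^{\lfloor T/2\rfloor}$, which matches \eqref{rate:regs}. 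The main obstacle is the opening step: guessing that $\bw_t = \betaln - \bbeta^{LN}_t$ is the correct object and verifying that its recursion decouples orthogonally into a contracting projection plus a perturbation governed by the RGS residual. Once that is in hand the rest is bookkeeping and two geometric-series estimates; the only secondary subtlety is stating the row-span invariant carefully so that $\sigma_{\min}(\bX)$ may legitimately replace the vanishing smallest eigenvalue of $\bSigma$.
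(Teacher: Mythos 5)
Your proposal is correct, and it is in substance the same proof as the paper's: your recursion $\bw_t = \bP_i\bw_{t-1} - (\Id_n - \bP_i)(\bbeta_t - \betaln)$ with its Pythagorean split is exactly the paper's decomposition \eqref{eq:firststep}, your bound $b_t \leq \alpha^t B$ via the RGS residual analysis is the paper's Lemma~\ref{lem:2} combined with Lemma~\ref{lem:RGS}, and your scalar recursion $a_t \leq \alpha a_{t-1} + \alpha^t B$ is resolved in the paper by citing Lemma~\ref{thm:rek_convergence} (from Zouzias--Freris) where you unroll it by hand with the estimate $n\alpha^n \leq \tfrac1{1-\alpha}$. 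Two of your substitutions are worth noting as improvements in rigor rather than differences in route: first, you prove the final geometric-series bound elementarily instead of quoting it; second, and more importantly, you state and prove the row-span invariant ($\bbeta^{LN}_t$ remains in the range of $\bX^*$ because each increment is a multiple of $(\bX^i)^*$), which the paper leaves implicit. The paper instead invokes Fact~\ref{fact:exp-pi} ``for any $\bw$,'' but in the underdetermined setting that statement is false for vectors with a component in the null space of $\bX$ (there $\bP_i\bw = \bw$ for every $i$); the fact is only valid for $\bw$ in the row span with $\sigma_{\min}$ read as the smallest nonzero singular value, and your invariant is precisely the justification needed to apply it to $\bbeta^{LN}_{t-1} - \betaln$. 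So your writeup is, if anything, slightly more careful than the published one on this point.
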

\begin{proof}
We devote the remainder of this section to the proof of Theorem~\ref{thm:main}.

Let $\mathbb{E}_{t-1}$ denote the expected value conditional on the first $t-1$ iterations, and instate the notation of the theorem. That is, $\E_{t-1}[\cdot] = \E[\cdot \given i_1, j_1, i_2, j_2, ... i_{t-1}, j_{t-1}]$ where $i_{t^*}$ is the $t^{*th}$ row chosen and $j_{t^*}$ is the $t^{*th}$ column chosen. We denote conditional expectation with respect to the choice of column as $\E^j_{t-1}[\cdot] = \E[\cdot \given i_1, j_1, ... i_{t-1}, j_{t-1}, i_t]$. Similarly, we denote conditional expectation with respect to the choice of row as $\E^i_{t-1}[\cdot] = \E[\cdot \given i_1, j_1, ... i_{t-1}, j_{t-1}, j_t]$. Then note by the law of total expectation we have that $\E_{t-1}[\cdot] = \E^i_{t-1}[\E^j_{t-1}[\cdot]]$.  
We will use the following elementary facts and lemmas.

\begin{fact} (\cite[Fact 3]{ZF12:Randomized-Extended}) For any $\bP_i$ as in the algorithm, $\mathbb{E}\|\bP_i\bw\|^2_2 \leq \alpha\|\bw\|^2_2$ for any $\bw$ in the row span of $\bX$.
\label{fact:exp-pi}
\end{fact}

\begin{remark} In Algorithm \ref{alg:regs}, $\bbeta_t^{LN} - \betaln$ is in the row span of $\bX$ for $t \geq 1$. Clearly $\betaln \in range(\bX^T)$. To see that $\bbeta_t^{LN} \in range(\bX^T)$ note that $\bbeta_t^{LN}$ consists of the RGS approximation $\bbeta_t$ from which we subtract $\bz_t$. This removes the component of $\bbeta_t$ which is orthogonal to the row span of $\bX$. 
\label{rem1}
\end{remark}

\begin{lemma}(\cite[Thm. 3.6]{LL10:Randomized-Methods}) 
\label{lem:LL10}
We have that $$\mathbb{E}_{t-1}\|\bX\bbeta_t - \bX\betaln\|^2_2 \leq \alpha \|\bX\bbeta_{t-1} - \bX\betaln\|^2_2 $$ and that $$\mathbb{E}\|\bX\bbeta_t - \bX\betaln\|_2^2 \leq \alpha^t\|\bX\bbeta_0 - \bX\betaln\|^2_2.$$
\label{lem:RGS}
\end{lemma}

Now we first consider $\|\bbeta^{LN}_t - \betaln\|^2_2$:
\begin{align}
	\|\bbeta^{LN}_t - \betaln\|^2_2 &= \|\bbeta_t - \bz_t - \betaln \|^2_2 \notag \\
	&= \| \bbeta_t - \bP_i(\bz_{t-1} + \bgamma_t) - \bP_i\betaln - (\Id_n - \bP_i)\betaln  \|^2_2 \notag  \\
	&= \| \bbeta_t - \bP_i(\bz_{t-1} + \bbeta_t - \bbeta_{t-1}) - \bP_i\betaln - (\Id_n - \bP_i)\betaln  \|^2_2 \notag \\
	&= \| (\Id_n - \bP_i)\bbeta_t + \bP_i(\bbeta_{t-1}-\bz_{t-1}) - \bP_i\betaln - (\Id_n - \bP_i)\betaln  \|^2_2 \notag \\	
	&= \| (\Id_n - \bP_i)\bbeta_t + \bP_i\bbeta^{LN}_{t-1} - \bP_i\betaln - (\Id_n - \bP_i)\betaln  \|^2_2 \notag \\
	&= \| \bP_i(\bbeta^{LN}_{t-1} - \betaln) + (\Id_n - \bP_i)(\bbeta_t - \betaln) \|^2_2 \notag \\
	&= \|\bP_i(\bbeta^{LN}_{t-1} - \betaln) \|^2_2 +  \| (\Id_n - \bP_i)(\bbeta_t - \betaln) \|^2_2.
	\label{eq:firststep}
\end{align}
So far, we have only used substitution of variables as defined for the algorithm and that $\betaln = \bP_i\betaln + (\Id_n - \bP_i)\betaln$ is an orthogonal decomposition. We first focus on the expected value of the second term.

\begin{lemma} We also have that
$$\mathbb{E}_{t-1} \|(\Id_n - \bP_i)(\bbeta_{t} - \betaln)\|_2^2 \leq \frac{\alpha\|\bX(\bbeta_{t-1} - \betaln)\|_2^2}{\|\bX\|_F^2}.  $$
\label{lem:2}
\end{lemma}
\begin{proof}
{\allowdisplaybreaks
	\begin{align*}
		\mathbb{E}_{t-1} \|(\Id_n - \bP_i)(\bbeta_{t} &- \betaln)\|_2^2 \\
		&=\mathbb{E}_{t-1} [(\bbeta_{t} - \betaln)^* (\Id_n - \bP_i)^*(\Id_n - \bP_i) (\bbeta_{t} - \betaln)] \\
		&=\mathbb{E}_{t-1} [(\bbeta_{t} - \betaln)^* (\Id_n - \bP_i) (\bbeta_{t} - \betaln)] \\
		&=   \mathbb{E}_{t-1} \Bigg[(\bbeta_{t} - \betaln)^*\left(\frac{(\bX^i)^*\bX^i}{\|\bX^i\|^2_2} \right) (\bbeta_{t} - \betaln) \Bigg] \\
		&= \mathbb{E}_{t-1} \left[ \frac{\|\bX^i(\bbeta_{t} - \betaln)\|_2^2}{\|\bX^i\|_2^2} \right] \\
			&=   \E_{t-1}^j\Bigg[\mathbb{E}^i_{t-1}\frac{\|\bX^i(\bbeta_{t} - \betaln)\|_2^2}{\|\bX^i\|_2^2}\Bigg] \\
			&= \E_{t-1}^j\Bigg[\sum_{i=1}^m \frac{\|\bX^i(\bbeta_{t} - \betaln)\|_2^2}{\|\bX^i\|_2^2} \cdot \frac{\|\bX^i\|_2^2}{\|\bX\|^2_F} \Bigg]\\
			& =\E_{t-1}^j\Bigg[\frac{\|\bX(\bbeta_{t} - \betaln)\|_2^2}{\|\bX\|_F^2}\Bigg] \\
			& \leq\frac{\alpha\|\bX(\bbeta_{t-1} - \betaln)\|_2^2}{\|\bX\|_F^2}. 
			\end{align*}
			}

The first line follows by expanding the norm, the second line since $(\Id_n - \bP_i)$ is a projection matrix, the third line from the definition of $\bP_i$, the fourth line is computation, the fifth line follows from the law of total expectation, the next two lines are computation, and finally the last line follows by Lemma \ref{lem:RGS}. Notice that in the seventh line, $\E_{t-1}^j = \E_{t-1}$ because the random variable $\bbeta_t$ only depends on the choice of columns.
%\qed
\end{proof}

We want to control the term $r_{t} = \mathbb{E}\| (\Id_n - \bP_i)(\bbeta_t - \betaln) \|^2_2$ by bounding it by some $\alpha$ and $B$ such that $r_t \leq \alpha^t B$. We calculate this here:
\begin{align*}
	\mathbb{E}\| (\Id_n - \bP_i)(\bbeta_t - \betaln) \|^2_2 &=\mathbb{E}[\mathbb{E}_{t-1}\| (\Id_n - \bP_i)(\bbeta_t - \betaln) \|^2_2] \\
	& \leq  \frac{\alpha\mathbb{E}\|\bX(\bbeta_{t-1} - \betaln)\|_2^2}{\|\bX\|_F^2}\\
	& \leq \alpha^{t} \frac{\|\bX\bbeta_0 - \bX\betaln\|_2^2}{\|\bX\|^2_F}.
\end{align*}
The first line follows by definition, the second is by Lemma \ref{lem:2}, and the third by Lemma \ref{lem:RGS}.
  
 Finally, we take the expected value of $\|\bbeta^{LN}_t - \betaln\|^2_2$. From equation (\ref{eq:firststep}) and using Fact \ref{fact:exp-pi} we obtain:
\begin{align*}
	\mathbb{E}\|\bbeta^{LN}_t - \betaln\|^2_2 &=  \mathbb{E}\|\bP_i(\bbeta^{LN}_{t-1} - \betaln) \|^2_2 + \mathbb{E}\| (\Id_n - \bP_i)(\bbeta_t - \betaln) \|^2_2 \\ 
	& \leq \alpha\mathbb{E}\|(\bbeta^{LN}_{t-1} - \betaln) \|^2_2 + \mathbb{E}\| (\Id_n - \bP_i)(\bbeta_t - \betaln) \|^2_2.
\end{align*}

We complete the proof using the following lemma from \cite{ZF12:Randomized-Extended}:
\begin{lemma} (\cite[Thm. 8]{ZF12:Randomized-Extended}) Suppose that for some $\alpha, \bar{\alpha} < 1$, the following bounds hold for all $t^* \geq 0$:
$$ \mathbb{E} \| \bbeta_{t^*}^{LN} - \betaln \|^2_2 \leq \alpha \mathbb{E} \| \bbeta_{t^*-1}^{LN} - \betaln\|^2_2 + r_{t^*} \text{ and } r_{t^*} \leq \bar{\alpha}^{t^*}B.$$
Then for any $T > 0$,
$$ \mathbb{E} \| \bbeta_{T}^{LN} - \betaln \|^2_2 \leq \alpha^T \|\bbeta_{0}^{LN} - \betaln\|^2_2 + (\alpha^{\lfloor T/2 \rfloor} + \bar{\alpha}^{\lfloor T/2 \rfloor}) \frac{B}{1-\alpha}.$$
\label{thm:rek_convergence}
\end{lemma}

Letting $\alpha = \bar{\alpha} = \alpha$, $r_t^* = \mathbb{E}\| (\Id_n - \bP_i)(\bbeta_t - \betaln) \|^2_2$, $B = \frac{\|\bX\bbeta_{0} - \bX\betaln\|_2^2}{\|\bX\|^2_F}$, and noting that $\bbeta_{0}^{LN}=\bbeta_{0}=0$, we complete the proof of Theorem \ref{thm:main}.

\end{proof}

\begin{remark}\label{regsoc}
Here we note that the same proof works for overdetermined systems.  In particular, this works because Lemma \ref{lem:LL10} holds for $\betals$ and $\bbeta^*$ also (see Thm. 3.6 in \cite{LL10:Randomized-Methods}). Also, Lemma \ref{lem:2} follows for both overdetermined consistent systems (see table in Section \ref{sec31}) as well as overdetermined inconsistent systems (from \eqref{eq:lem2incon} and subsequent arguments). 
\end{remark}

\subsection{Comparison}\label{sec43}

Theorem \ref{thm:main} shows that, like the RK and REK methods, REGS converges linearly to the least-norm solution in the underdetermined case.  We believe it serves to complement existing analysis and completes the theory of these iterative methods in all three cases of interest.  For that reason, we compare the three approaches for the underdetermined setting here.  For ease of comparison, set $\alpha$ as in Theorem \ref{thm:main}, and write $\kappa = \sigma_{\max}(\bX)/\sigma_{\min}(\bX)$ for the condition number of $\bX$.  From the convergence rate bounds for RK \cite{SV09:Randomized-Kaczmarz} and REK \cite{ZF12:Randomized-Extended} given in Section \ref{sec:variants}, and after applying elementary bounds to \eqref{rate:regs} of Theorem \ref{thm:main}, we have:

\begin{align}
&\text{(RK)}& \mathbb{E}\|\bbeta_t - \betaln\|_2^2 \quad  &\leq \quad  \alpha^t\|\betaln\|_2^2\label{comp:rk}\\
&\text{(REK)}& \mathbb{E}\|\bbeta_{2t} - \betaln\|_2^2 \quad  &\leq \quad  \alpha^t(1+2\kappa^2)\|\betaln\|_2^2\label{comp:rek}\\
&\text{(REGS)}& \mathbb{E}\|\bbeta_{2t} - \betaln\|_2^2 \quad  &\leq \quad  \alpha^t(1+2\kappa^2)\|\betaln\|_2^2\label{comp:regs}.
\end{align}

We find similar results in the overdetermined, inconsistent setting. Using the convergence rate bounds for RGS \cite{LL10:Randomized-Methods}, REK \cite{ZF12:Randomized-Extended}, and REGS (Theorem \ref{thm:main}), also given in section \ref{sec:variants}, we have:

\begin{align}
&\text{(RGS)}& \mathbb{E}\|\bbeta_t - \betals\|_2^2 \quad  &\leq \quad  \alpha^t\|\betals\|_2^2\label{comp:rk_oi}\\
&\text{(REK)}& \mathbb{E}\|\bbeta_{2t} - \betals\|_2^2 \quad  &\leq \quad  \alpha^t(1+2\kappa^2)\|\betals\|_2^2\label{comp:rek_oi}\\
&\text{(REGS)}& \mathbb{E}\|\bbeta_{2t} - \betals\|_2^2 \quad  &\leq \quad  \alpha^t(1+2\kappa^2)\|\betals\|_2^2\label{comp:regs_oi}.
\end{align}

Thus, up to constant terms (which are likely artifacts of the proofs), the bounds provide the same convergence rate $\alpha$, which is not surprising in light of the connections between the methods. In the next section, we compare these approaches experimentally.

\section{Empirical Results} \label{sec:exp}

In this section we present our experimental results. The code used to run these experiments can be found at \cite{code}. For each experiment, we initialize a matrix $\bX$ and vector $\bbeta$ with independent standard normal entries and run 50 trials. The right hand side $\by$ is taken to be $\bX\bbeta$. At each iteration $t$, we keep track of the $\ell_2$-error $\|\bbeta^{LN}_t - \betaln\|^2_2$ and fix the stopping criterion to be $\|\bbeta^{LN}_t - \betaln\|^2_2 < 10^{-6}$ (of course in practice one chooses a more practical criterion). In each plot, the solid blue line represents the median $\ell_2$-error at iteration $t$, the light blue shaded region captures the range of error across trials, and the red line represents the theoretical upper bound at each iteration. In Figure \ref{fig:vary_m}, we show the convergence of $\bbeta^{LN}_t$ for varying sized underdetermined linear systems. In Figure \ref{fig:fig2}, we show the convergence of a matrix $\bX$ of size 700x1000 and its theoretical upper bound. As it turns out, the REGS algorithm often converges much faster than the theoretical worst-case bound. %due to some loose bounds in the proof of Theorem \ref{thm:main}.

\begin{figure}[ht]
        %\centering
        %\begin{subfigure}[b]{0.45\textwidth}
                \includegraphics[natwidth=996,natheight=384, scale=0.45]{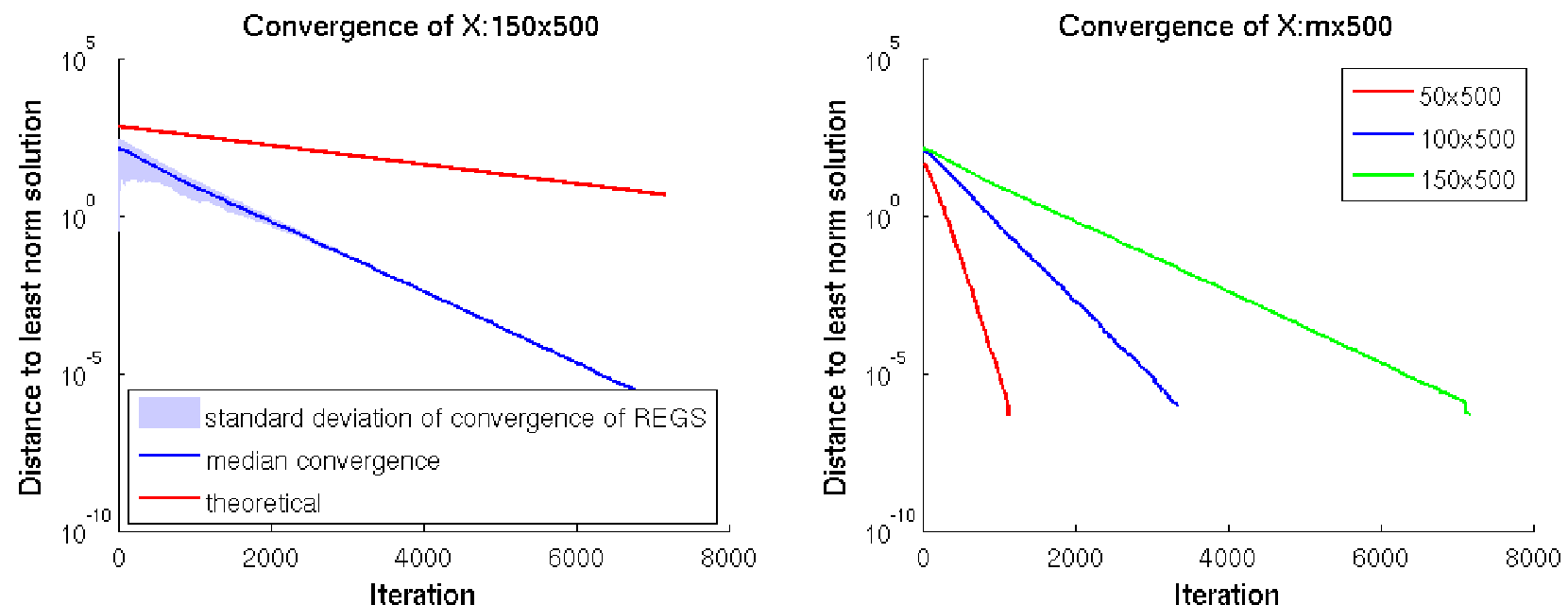}
                %\includegraphics[natwidth=512,natheight=384, scale=0.4]{X_150_500.eps}
                %\includegraphics[natwidth=512,natheight=384, scale=0.4]{X_m_500.eps}
                %\caption{}
                %\label{fig:X_m_1000}
        %\end{subfigure} 
        \caption{Left: $\ell_2$-error (log-scale) of REGS on a $150 \times 500$ matrix and its the theoretical bound. Right: Comparison of $\ell_2$-error (log-scale) of REGS for $m \times 500$ sized matrices with $m=50,100,150$.}\label{fig:vary_m}
\end{figure}

We also tested REGS on tomography problems using the Regularization toolbox by \citet{hansen1994regularization} (\texttt{http://www.imm.dtu.dk/$\sim$pcha/Regutools/}). For the 2D tomography problem $\bX\bbeta = \by$ with $\bX$ an $m \times n$ matrix where $n = dN^2$ and $m = N^2$, we use $N=20$ and $d=3$ for our experiments. Here, $\bX$ consists of samples of absorption along a random line on an $N \times N$ grid and $d$ is the oversampling factor. The results from this experiment are shown in Figure \ref{fig:fig2}.

\begin{figure}[!ht]
\includegraphics[natwidth=1024, natheight=384, scale=0.45]{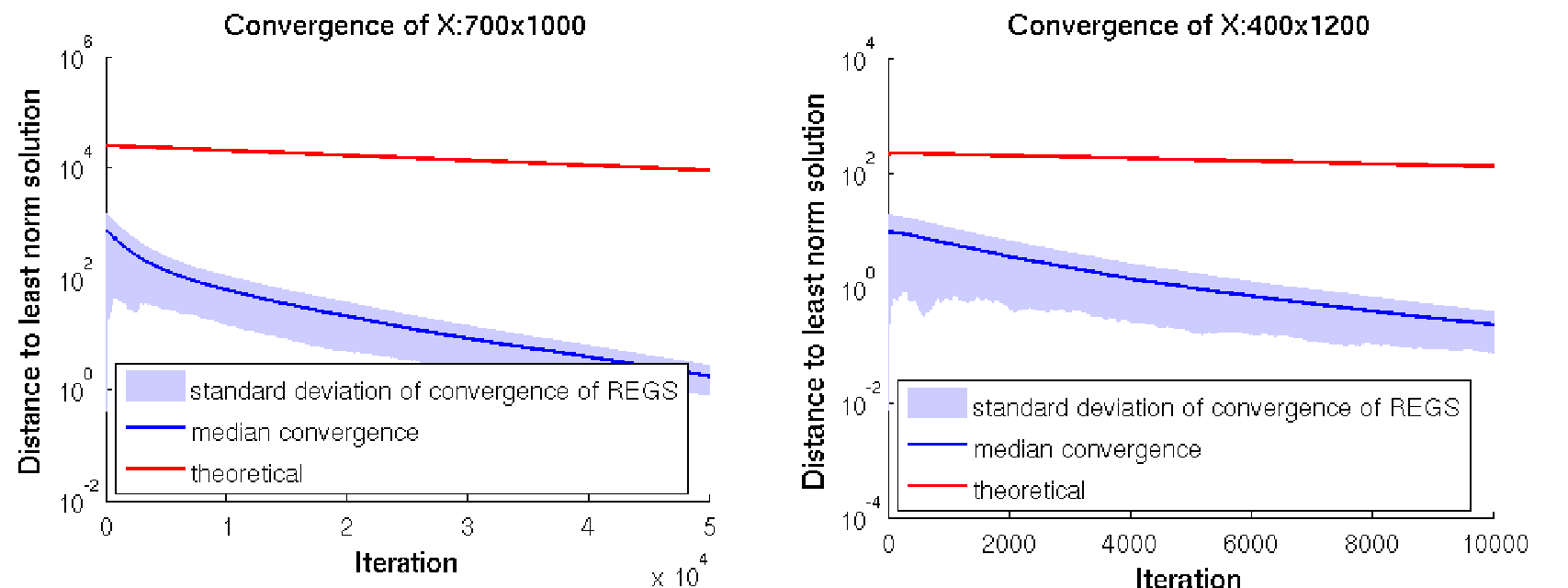}
\caption{Left: $\ell_2$-error (log-scale) of REGS on a $700 \times 1000$ matrix and its the theoretical bound.  Right: $\ell_2$-error (log-scale) of REGS on the tomography problem with a $400 \times 1200$ matrix.}\label{fig:fig2}
\end{figure}

We also compare the performance of all four algorithms (RK, REK, RGS, REGS) under the different settings discussed in this paper. Each line in each plot represents the median $\ell_2$-error at that iteration or CPU time over 50 trials using a stopping criterion of $10^{-6}$. For the underdetermined case, $\bX$ is a $50 \times 500$ Gaussian matrix and a $500 \times 50$ Gaussian matrix for the overdetermined cases. In the overdetermined, inconsistent case, we set $\by = \bX \bbeta + \bf{r}$ where $\bf{r} \in \text{null}(X^*)$ (computed in Matlab using the \texttt{null()} function). Figure \ref{fig:underincon}, Figure \ref{fig:overinconsis}, and Figure \ref{fig:overconsis} show the empirical results for the underdetermined, overdetermined inconsistent, and overdetermined consistent cases respectively.  Note we only plot the methods which actually converge to the desired solution in each case.  Looking at iterations to convergence, it seems that RK and RGS converge faster than their extended counterparts while REGS and REK converge to the desired solution at about the same rate.

\begin{figure}[!ht]
\includegraphics[natwidth=1024, natheight=384, scale=0.4]{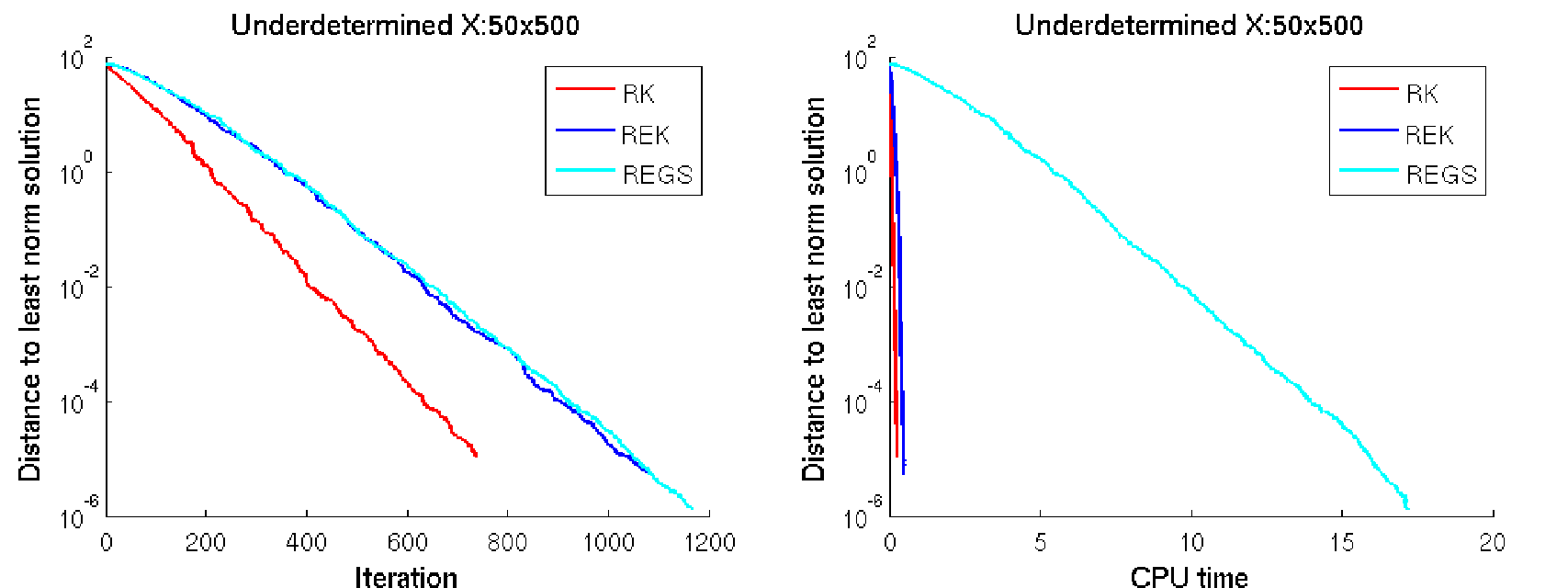}
	\caption{Comparison of median $\ell_2$-error (log-scale) of RK, REK, and REGS for an underdetermined system.}
	\label{fig:underincon}
\end{figure}

\begin{figure}[!ht]
	\includegraphics[natwidth=1001, natheight=384, scale=0.4]{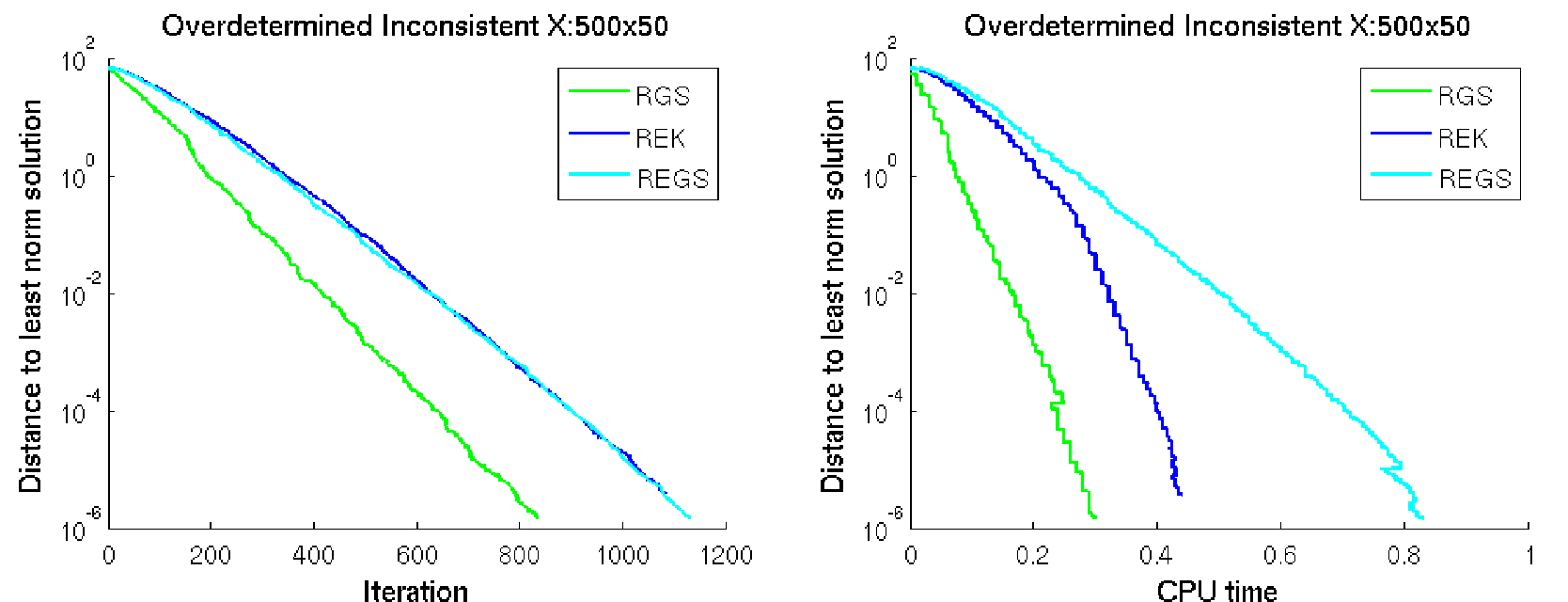}	
 \caption{Comparison of median $\ell_2$-error (log-scale) of RGS, REK, and REGS for an overdetermined, inconsistent system.}
	\label{fig:overinconsis}
\end{figure}
\begin{figure}[!ht]
 \includegraphics[natwidth=1024, natheight=384, scale=0.4]{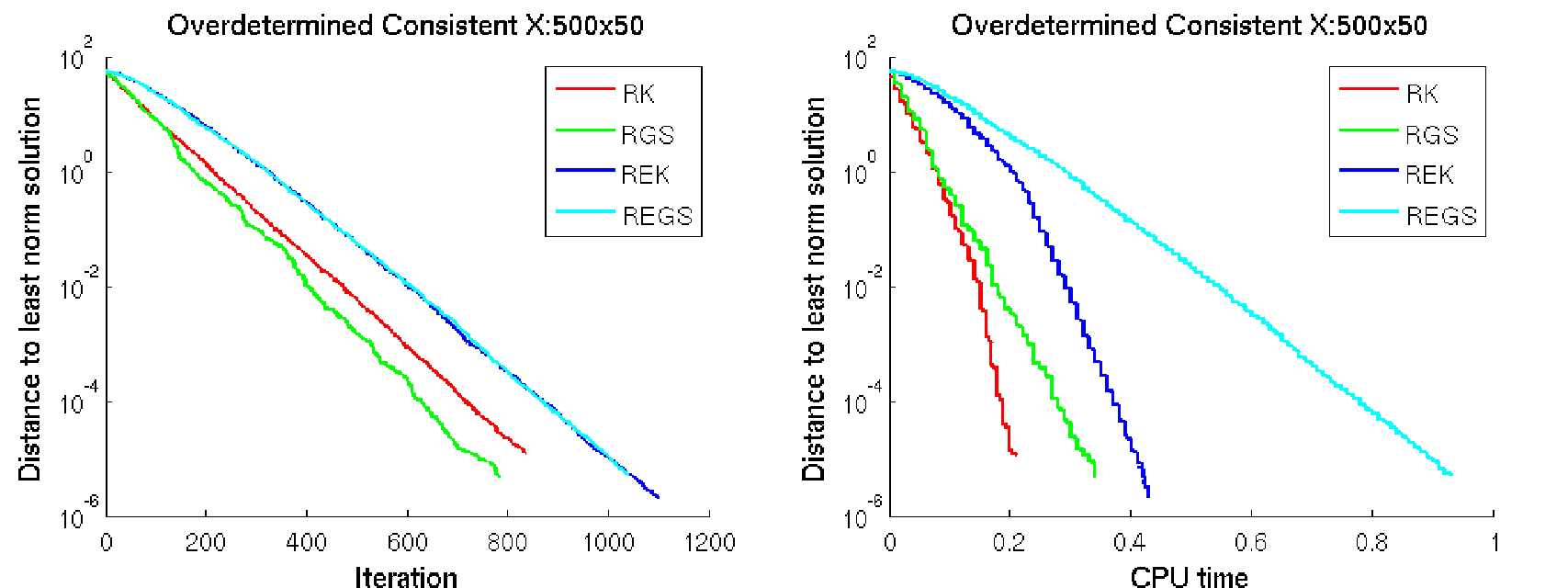}
	\caption{Comparison of median $\ell_2$-error (log-scale) of RK, RGS, REK, and REGS for an overdetermined, consistent system}
	\label{fig:overconsis}
\end{figure}

\section{Conclusion}\label{sec:conclude}
The Kaczmarz and Gauss-Seidel methods operate in two different spaces (i.e. row versus column space), but share many parallels.  In this paper we drew connections between these two methods, highlighting the similarities and differences in convergence analysis.  The approaches possess conflicting convergence properties; RK converges to the desired solution in the underdetermined case but not the inconsistent overdetermined setting, while RGS does the exact opposite.  The extended method REK in the Kaczmarz framework fixes this issue, converging to the solution in both scenarios.  Here, we present the REGS method, a natural extension of RGS, which completes the overall picture.  We hope that our unified analysis of all four methods will assist researchers working with these approaches.

\section*{Acknowledgments}
Needell was partially supported by NSF CAREER grant $\#1348721$, and the Alfred P. Sloan Fellowship. Ma was supported in part by AFOSR MURI grant FA9550-10-1-0569. Ramdas was supported in part by ONR MURI grant N000140911052. The authors would also like to thank the Institute of Pure and Applied Mathematics (IPAM) at which this collaboration started, and the reviewers of this manuscript for their thoughtful suggestions. We also thank the author of \cite{du2018refined} who pointed out the need to clarify what we have now written as Remark \ref{rem1}.

\bibliographystyle{agsm}
\bibliography{rk}

\end{document}